\theoremstyle{plain}
\newtheorem{thm}{Theorem}[section]
\newtheorem{prop}[thm]{Proposition}
\theoremstyle{definition}
\newtheorem{defi}[thm]{Definition}
\newtheorem{rem}[thm]{Remark}
\newtheorem{lem}[thm]{Lemma}
\newcommand{\pdt}{\mathbin{\rule[.1ex]{.5em}{.03em}\rule[.2ex]{.03em}{.7ex}}}
\title{Invariant formulation of a variational problem}
\author{Imsatfia Moheddine}
\begin{document}

\maketitle

\begin{abstract}
In this paper we present a Hamiltonian formulation of multisymplectic type of an invariant variational problem on smooth submanifold of dimension $p$ in a smooth manifold of dimension $n$ with $p<n$.
\end{abstract}


\begin{LARGE}
 Introduction
\end{LARGE}

Let $\mathcal{M}$ be a smooth manifold of dimension $n$ and $\mathcal{N}$ be an oriented smooth submanifold of dimension $p$ in $T_x\mathcal{M}$, for $x\in\mathcal{M}$. We study the case $n=3$ and $p=2$ in details and consider generalizations. Let $\Sigma$ be an oriented surface in $\mathcal{M}$, in each $M\in\Sigma$, denote by $P$ the tangent plane. We define the \textit{Grassmannian} as the set of all these elements. We define the 2-form areolar action $\ell$ homogenous of degree zero in $y$ and we denote by $L$ a lagrangian homogenous of degree one in $y$ such that $\frac{d L}{dy}=\ell$. For some basis $(u_1,u_2)$ of $P$, and $(\theta^1,\theta^2)$ the dual basis, we define the action 
$$
 \mathcal{L}(\Sigma):=\int_\Sigma L(x,u_1\wedge u_2)\theta_1\wedge\theta_2,
$$

We use notions of mechanics and we construct the multisymplectic type of Hamiltonnian formulation. We prove that this description is conserved by graphs by comparison with the presentation of Cartan in \cite{Cartan1933}. Finally we study this for $n>3$ and $p<n$.
\section{The case presented by Cartan: a surface $ \Sigma $ in a smooth manifold $ \mathcal{M} $ of dimension 3}
Let  $\mathcal{M}$ be a smooth manifold of dimension 3. We define the Grassmannian bundle of planes in $T\mathcal{M}$ by:
\[
 Gr^2\mathcal{M}:=\{(x,E)\vert x\in\mathcal{M}, \ \ E \text{ oriented plane}\footnote{$E$ is called ``element'' by Cartan.}\text{ in } T_x\mathcal{M}\}
\]
\begin{defi} Assume that $\Lambda^2T_x\mathcal{M}:=(\Lambda^2T^\ast_x\mathcal{M})^\ast$ and $ \Lambda^2T\mathcal{M}=\bigcup_{x\in\mathcal{M}}\Lambda^2T_x\mathcal{M}$.
  Let $u_1,u_2\in T_x\mathcal{M}$. We define $u_1\wedge u_2\in \Lambda^2T_x\mathcal{M}$ by: $\forall\alpha,\beta\in T^\ast_x\mathcal{M}$
\[
 (\alpha\wedge\beta)(u_1\wedge u_2):=\alpha\wedge\beta(u_1,u_2)=\alpha(u_1)\beta(u_2) -\alpha (u_2)\beta(u_1).
\]
\end{defi}
We denote by $\pi_{Gr^2\mathcal{M}}:Gr^2\mathcal{M}\longrightarrow\mathcal{M}$ the canonical projection. Let $x=(x^1,x^2,x^3)$ be a local coordinates on $\mathcal{M}$ and $\left(\frac {\partial}{\partial x^1},\frac{\partial}{\partial x^2},\frac{\partial}{\partial x^3}\right)$ a basis of $T_x\mathcal{M}$, so a basis of $\Lambda^2T_x\mathcal{M}$ is given by
 \[
\left(\frac {\partial}{\partial x^1}\wedge\frac {\partial}{\partial x^2},\frac{\partial} {\partial x^2}\wedge\frac {\partial}{\partial x^3},\frac {\partial}{\partial x^3}\wedge \frac {\partial}{\partial x^1}\right).
\]
Let $(u_1,u_2)$ be a direct basis of $E$, and $(x^1,x^2,x^3; y^{12},y^{23},y^{31})$ the coordinates on $\Lambda^2T\mathcal{M}$ where $y^{12},y^{23}$, $y^{31}$ are the coordinates representing $E$. Then, we have
\[
 u:=u_1\wedge u_2=y^{12}\frac {\partial}{\partial x^1}\wedge\frac {\partial}{\partial x^2}+ y^{23}\frac{\partial} {\partial x^2}\wedge\frac {\partial}{\partial x^3}+ y^{31}\frac {\partial}{\partial x^3}\wedge \frac {\partial}{\partial x^1}
\]
\begin{rem}\label{rem}
  We consider the 3-form $\omega:=dx^1\wedge dx^2\wedge dx^3$. We have:
 \begin{enumerate}
\item The kernel  of $u\pdt\omega$ is the plane $E$.
\item $Gr^2_x\mathcal{M}\simeq(\Lambda^2T_x\mathcal{M}\setminus\{0\})/\mathbb{R}^\ast+$.
\end{enumerate}
\end{rem}
We deduce that the homogeneous coordinates on the Grassmannian bundle over $\mathcal{M}$ are $(x;y):=(x^1,x^2,x^3;[y^{12}:y^{23}:y^{31}])$ 
\[
  Gr^2\mathcal{M}:=\{(x;y)\vert x=(x^1,x^2,x^3)\in\mathcal{M}\text{ and } E=y=[y^{12}:y^{23}:y^{31}]\},
\]
\begin{defi}
 Let $\mathcal{M}$ be a differential manifold and $n\in\mathbb{N}$, we say that a $(n+1)$-form $\Omega$ in $\mathcal{M}$ is \textit{multisymplectic} if we have
\begin{enumerate}
 \item $\Omega$ is nondegenerate, that is, $\forall M\in\mathcal{M},\ \ \forall\xi\in T_M\mathcal{M}$, if $\xi\pdt\Omega_M=0$, then $\xi=0$).
\item $\Omega$ is closed, $d\Omega=0$.
\end{enumerate}
Any manifold $\mathcal{M}$ equiped with a multisymplectic form $\Omega$ is called a \textit{ multisymplectic manifold}.
\end{defi}
\begin{defi}(\cite{Helein2010}):
 The $(n+1)$-form $\Omega$ is called \textit{pre-multisymplectic form} if it's closed.
 A differential manifold $\mathcal{M}$ equiped of $\Omega$ is called \textit{pre-multisymplectique manifold}.\\
\end{defi}
\subsection{Lagrangian formulation}
\begin{defi}
A 2-form \textit{areolar action} is a 2-form $\ell$ on $Gr^2\mathcal{M}$, which can be written
\[
 \ell(x,y):=\ell_{12}(x,y)dx^1\wedge dx^2+\ell_{23}(x,y)dx^2\wedge dx^3+ \ell_{31}(x,y) dx^3 \wedge dx^1.
\]
where $\ell_{12}(x,y),\ell_{23}(x,y)$ and $\ell_{31}(x,y)$ are homogeneous of degree 0 in $y$ over $]0,+\infty[$.
\end{defi}
\begin{defi}
We define a homogeneous Lagrangian $L:\Lambda^2T\mathcal{M}\rightarrow\mathbb{R}$ as a continuous function of $\Lambda^2T\mathcal{M}$, such that
\begin{enumerate}
\item $L$ is of class $\mathcal{C}^1$ over $\Lambda^2T\mathcal{M}\setminus\sigma_0$ where $\sigma_0$ is the zero section. 
\item $L$ is homogeneous of degree 1 in $y$ over $]0,+\infty[$. 
\end{enumerate}
\end{defi}
\begin{rem}
 If $u_1,u_2$ are two vectors of the element $E_x$ then $L(x,u_1\wedge u_2)$ represent \textit{``the infinitesimal area''} of the parallelepiped formed by $u_1$ and $u_2$.
\end{rem}

Denote by $\Sigma$ an oriented surface in $\mathcal{M}$, let $(u_1,u_2)$ be a moving basis on $\Sigma$ and $(\theta_1,\theta_2)$ its dual basis, we define 
\begin{equation}\label{action1}
  \mathcal{L}(\Sigma):=\int_\Sigma L(x,u_1\wedge u_2)\theta_1\wedge\theta_2,
\end{equation}
Thus, by Euler formula, we have
 $$L(x,y)=\sum_{\alpha<\beta}\frac{\partial L}{\partial y^ {\alpha\beta}}(x,y)y^{\alpha\beta}=\sum_{\alpha<\beta}\ell_{\alpha\beta}(x,y)y^{\alpha\beta}.$$
 Where
\[
 \left\{
\begin{array}{c}
 \frac{\partial L}{\partial y^{12}}=\ell_{12}\\
\frac{\partial L}{\partial y^{23}}=\ell_{23}\\
\frac{\partial L}{\partial y^{31}}=\ell_{31}\\
\end{array}
\right.
\]
So
\[
 \ell(x,y):=\sum_{\alpha<\beta}\left(dx^\alpha\wedge dx^\beta\otimes\frac{\partial} {\partial y^{\alpha\beta}}\right)\pdt\ \ dL
= \frac{\partial L}{\partial y^{12}}dx^1\wedge dx^2+ \frac{\partial L}{\partial y^{23}}dx^2\wedge dx^3+ \frac{\partial L}{\partial y^{31}}dx^3 \wedge dx^1.
\]
We deduce that the inner product by $\sum_{\alpha<\beta}\left(dx^\alpha\wedge dx^\beta\otimes\frac{\partial} {\partial y^{\alpha\beta}}\right)$ is the canonical operator of $(\Lambda^2T_x\mathcal{M})^\ast \rightarrow\Lambda^2T_x^\ast\mathcal{M}$ and
\[
 \ell=\sum_{\alpha<\beta}\left(dx^\alpha\wedge dx^\beta\otimes\frac{\partial} {\partial y^{\alpha\beta}}\right)\pdt\ \ dL
\]
with condition $y^{\alpha\beta}+y^{\beta\alpha}=0$.
\begin{defi}
A surface $\Sigma$ is called a critical point of $\mathcal{L}$, if and only if, for all compact $K\subset\mathcal{M},\ \ K\cap\Sigma$ is a critical point of$$\mathcal{L}_K(\Sigma):=\int_{\pi_{Gr^2\mathcal {M}}^{-1}(K)\cap T\Sigma}\ell$$ where, $T\Sigma=\{(x,T_x\Sigma)\in Gr^2\mathcal {M}; x\in\Sigma\}$.
\end{defi}
The action (\ref{action1}) given,
\[
 \mathcal{L}(\Sigma):=\int_{T\Sigma}\ell=\int_{T\Sigma}\sum_{\alpha<\beta}\left(dx^\alpha\wedge dx^\beta\otimes\frac{\partial} {\partial y^{\alpha\beta}}\right) \pdt\ \ dL.
\]
\subsection{Hamiltonian formulation (multisymplectic)}
 We write the \textit{Legendre transform} function by 
\[
 \Lambda^2T\mathcal{M}\ \ \ \longrightarrow \ \ \ \Lambda^2T^\ast\mathcal{M}\ \ \ \ \ \ \ \ \ \ 
\]
\[
 (x,y)\ \ \ \longmapsto\ \ \ \left(x,\frac{\partial L}{\partial y}(x,y)\right),
\]
where $\frac{\partial L}{\partial y}(x,y)=\left(\frac{\partial L}{\partial y^{\alpha\beta}}(x,y)\right)_{\alpha<\beta}$. We consider a Hamiltonian function by
\[
 \mathcal{H}(x,P)=\sum_{\alpha<\beta}p_{\alpha\beta}y^{\alpha\beta}-L(x,y),
\]
where $y$ is a solution of $\frac{\partial L}{\partial y}(x,y)=p_{\alpha\beta}$. Then, by Euler formula, we have
\[
 L(x,y)=\frac{\partial L}{\partial y^{\alpha\beta}}(x,y)y^{\alpha\beta}=\sum_{\alpha<\beta} p_{\alpha\beta}y^{\alpha\beta} \text{ where } p_{\alpha\beta}=\frac{\partial L}{\partial y^{\alpha\beta}}(x,y),
\]
so the Hamiltonian $\mathcal{H}$ vanishes.
\begin{defi}
We say that the Lagrangian $L$ is nondegenerate if it's of class $\mathcal{C}^2$ over $\Lambda^2T\mathcal{M}\setminus\{\sigma_0\}$ and $\frac{\partial^2(L^2)}{\partial y^{\alpha \beta}\partial y^{\alpha'\beta'}}>0$ out of $\sigma_0$.
In particular $L^2$ is then a function stricly convexe.
\end{defi}

\begin{lem}
If the Lagrangian $L$ is nondegenerate, then,
\[
 \text{rank}\left(\frac{\partial^2 L^2}{\partial y^{\alpha \beta}\partial y^{\alpha'\beta'}}\right)=1+\text{rank}(\text{Hess}(L)).
\]
\end{lem}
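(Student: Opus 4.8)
The plan is to express the Hessian of $L^2$ in terms of the Hessian of $L$ through the product rule, and then to exploit the degree-$1$ homogeneity of $L$ via Euler's identity to read off the rank relation from a direct computation of the kernel.

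First I would differentiate twice. Writing $\partial_{\alpha\beta}$ for $\partial/\partial y^{\alpha\beta}$, the product rule gives
$$\partial_{\alpha\beta}\partial_{\alpha'\beta'}(L^2)=2\,\partial_{\alpha\beta}L\,\partial_{\alpha'\beta'}L+2L\,\partial_{\alpha\beta}\partial_{\alpha'\beta'}L,$$
which in matrix form reads $\mathrm{Hess}(L^2)=2\big(v\,v^{\top}+L\,\mathrm{Hess}(L)\big)$, where $v:=\big(\partial_{\alpha\beta}L\big)_{\alpha<\beta}$ is the gradient of $L$ in the $y$-variables. Since the scalar factor $2$ does not change the rank, it suffices to compute the rank of $M:=v\,v^{\top}+L\,H$, where $H:=\mathrm{Hess}(L)$.

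Next I would bring in the homogeneity. Because $L$ is homogeneous of degree $1$ in $y$, Euler's identity gives $v^{\top}y=\sum_{\alpha<\beta}y^{\alpha\beta}\partial_{\alpha\beta}L=L$; differentiating this relation once more (equivalently, applying Euler's identity to each first derivative $\partial_{\alpha\beta}L$, which is homogeneous of degree $0$) yields $Hy=0$. Thus $y\in\ker H$ while $v^{\top}y=L\neq0$ off the zero section. These two facts are the heart of the matter: they state simultaneously that $y$ lies in the kernel of $H$ and that the linear form $z\mapsto v^{\top}z$ does not vanish on that kernel.

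Finally I would compute $\ker M$. If $Mz=0$, pairing with $y$ and using $y^{\top}H=(Hy)^{\top}=0$ gives $(v^{\top}y)(v^{\top}z)=0$, hence $v^{\top}z=0$ since $L\neq0$; substituting back into $Mz=0$ forces $Hz=0$. Conversely, any $z\in\ker H$ with $v^{\top}z=0$ clearly lies in $\ker M$, so $\ker M=\ker H\cap\{v\}^{\perp}$. Because $y\in\ker H$ satisfies $v^{\top}y\neq0$, the constraint $v^{\top}z=0$ is nontrivial on $\ker H$ and lowers its dimension by exactly one, whence $\dim\ker M=\dim\ker H-1$ and therefore $\mathrm{rank}(M)=1+\mathrm{rank}(H)$, which is the claim. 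The step I expect to be the main obstacle is precisely the verification that the hyperplane $\{v\}^{\perp}$ cuts $\ker H$ transversally; this is exactly what the Euler relations $Hy=0$ and $v^{\top}y=L\neq0$ guarantee, and it is here that the nondegeneracy hypothesis is used, ensuring both the $\mathcal{C}^2$ regularity needed to form the Hessians and the nonvanishing of $L$ away from the zero section.
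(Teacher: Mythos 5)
Your proof is correct and follows essentially the same route as the paper: both rest on the product-rule decomposition $\mathrm{Hess}(L^2)=2\left(\nabla L\,\nabla L^{\top}+L\,\mathrm{Hess}(L)\right)$ combined with the Euler relations $\mathrm{Hess}(L)\,y=0$ and $\nabla L\cdot y=L\neq 0$ off the zero section. The only difference is bookkeeping: the paper exhibits the block structure of $\mathrm{Hess}(L^2)$ in an adapted basis $(e_1,e_2,e_3=y)$ with $e_1,e_2\in\ker dL$, whereas you compute $\ker\left(\nabla L\,\nabla L^{\top}+L\,\mathrm{Hess}(L)\right)$ invariantly --- the same linear algebra, done coordinate-free.
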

\begin{proof}
The Lagrangian $L$ is class $\mathcal{C}^1$ over $\Lambda^2T_x\mathcal{M}$, and class $\mathcal{C}^\infty$ over $\Lambda^2T_x\mathcal{M}\setminus \{\sigma_0\}$, thus $L^2$ is homogeneous of degree 2 and of class $\Lambda^2T_x\mathcal{M}$ and of class $\mathcal{C}^\infty$ over $\Lambda^2T_x\mathcal{M}\setminus \{\sigma_0\}$. For $L$ is nondegenerate, this leads to the convexity. It's easy to check that
 $\frac{\partial(L^2)}{\partial y^{\alpha\beta}}(0)=0$. 
Note $T_yS_x=\ker\left(\frac{\partial L}{\partial y^{\alpha\beta}}(S_x)\right)$, if $(e_1,e_2)$ is a basis of $T_yS_x$ and $e_3=y=y^{\alpha'\beta'}\frac{\partial }{\partial y^{\alpha\beta}}$, we denote by $(z^1,z^2,z^3)$ the coordinates in basis  $(e_1,e_2,e_3)$, we have $\frac{\partial L}{\partial z^1}=\frac{\partial L}{\partial z^2}=0$. More $d\left(\frac{\partial L}{\partial y^{\alpha\beta}}\right) (e_3)= y^{\alpha'\beta'}\frac{\partial }{\partial y^{\alpha'\beta'}} \left(\frac{\partial L}{\partial y^{\alpha\beta}}\right)=\frac{\partial^2 L}{\partial y^{\alpha\beta}\partial y^{\alpha'\beta'}}y^{\alpha'\beta'}=0$ by homogeneity, which results in the coordinates $z^a$ by $\frac{\partial^2 L}{\partial z^3\partial z^a}=0 \ \ \forall a=1,2,3$. Thus if we compute $\frac{\partial^2(L^2)}{\partial y^{\alpha \beta}\partial y^{\alpha'\beta'}}$ out of $\sigma_0$. we find
\begin{equation}
\text{rank}\left(\frac{\partial^2 L^2}{\partial z^a\partial z^b}\right)_{1\leq a,b\leq 3}
=1+\text{rank} d\left(\frac{\partial L}{\partial y}\right)\mid_{TyS}
\end{equation}
In particular $L$ is nondegenerate if and only if $\text{rank}\ \ d\left(\frac{\partial L}{\partial y}\right)\mid_{TyS_x}=2$, so $\frac{\partial L}{\partial y}\mid_{S_x}$ is an immersion $S_x$.
\end{proof}

\begin{prop}\label{surface_convexe}
We consider the Lagrangian $L:\Lambda^2T\mathcal{M}\rightarrow\mathbb{R}$ is of class $\mathcal{C}^k,\ \ k\geq2$. We have the following statement:\\ If $L$ is nondegenerate, then, the image of the Legendre transform,\\ $\mathcal{N}:=\{(x,p)\in\Lambda^2T^\ast\mathcal{M}/p_{\alpha\beta}= \frac{\partial L}{\partial y^{\alpha\beta}}(x,y),$ is a regular convexe hypersurface in $\Lambda^2T^\ast\mathcal{M}$.
\end{prop}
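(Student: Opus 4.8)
The plan is to reduce the statement to a fiberwise assertion and then exploit the degree-$0$ homogeneity of the Legendre map together with the strict convexity of $L^2$. Fix $x\in\mathcal{M}$ and write $\phi_x:=\frac{\partial L}{\partial y}(x,\cdot):\Lambda^2T_x\mathcal{M}\setminus\{0\}\to\Lambda^2T_x^\ast\mathcal{M}$, and denote the pairing by $\langle p,z\rangle:=\sum_{\alpha<\beta}p_{\alpha\beta}z^{\alpha\beta}$, as in the definition of $\mathcal{H}$. Since $L(x,\cdot)$ is homogeneous of degree $1$, the map $\phi_x$ is homogeneous of degree $0$, so its image coincides with $\phi_x(S_x)$, where $S_x=\{y:L(x,y)=1\}$ is the indicatrix already used in the Lemma. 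Because $L$ is the positively homogeneous Lagrangian of an area, $B_x:=\{y:L(x,y)\le 1\}$ is a compact convex body with smooth boundary $S_x=\partial B_x$, and $\mathcal{N}_x:=\phi_x(S_x)$ is the object whose regularity and convexity I must establish.

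First I would settle regularity. By the previous Lemma, nondegeneracy of $L$ is equivalent to $\operatorname{rank} d\!\left(\frac{\partial L}{\partial y}\right)\big|_{T_yS_x}=2$, i.e.\ $\phi_x|_{S_x}$ is an immersion of the $2$-dimensional surface $S_x$ into the $3$-dimensional fiber $\Lambda^2T_x^\ast\mathcal{M}$; hence $\mathcal{N}_x$ is locally a regular hypersurface. To upgrade this to an embedded, globally convex surface I would identify $\mathcal{N}_x$ with the boundary of the polar body of $B_x$.

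The crux is the following duality computation. For $y\in S_x$, Euler's relation gives $\langle\phi_x(y),y\rangle=L(x,y)=1$, while convexity of $L(x,\cdot)$ yields the supporting inequality $L(x,z)\ge L(x,y)+\langle\phi_x(y),z-y\rangle=\langle\phi_x(y),z\rangle$ for all $z$. Thus $\langle\phi_x(y),z\rangle\le 1$ for every $z\in B_x$ with equality at $z=y$, so $\phi_x(y)$ lies on the boundary of the polar body $B_x^\circ:=\{p:\langle p,z\rangle\le 1\ \forall z\in B_x\}$, giving $\mathcal{N}_x\subseteq\partial B_x^\circ$. Strict convexity of $L^2$ (equivalently, positive-definiteness of the Hessian of $L^2$ off $\sigma_0$, which forces $B_x$ to be strictly convex with positively curved boundary) has two consequences. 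It makes $\phi_x|_{S_x}$ injective, since each supporting hyperplane of a strictly convex body meets its boundary in a single point, so an equality $\phi_x(a)=\phi_x(b)$ forces $a=b$; combined with the immersion property and compactness of $S_x$ this makes $\phi_x|_{S_x}$ an embedding. It also makes the Gauss-type correspondence $y\mapsto\phi_x(y)$ surjective onto $\partial B_x^\circ$, so $\mathcal{N}_x=\partial B_x^\circ$ is the boundary of a convex body, hence a regular convex hypersurface in the fiber.

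Finally I would let $x$ vary: the positive-definiteness of the Hessian is an open condition and $L$ is $\mathcal{C}^k$ with $k\ge 2$, so $\phi_x$ and the body $B_x^\circ$ depend smoothly on $x$, and the fiberwise embeddings assemble into a regular convex hypersurface $\mathcal{N}=\bigcup_{x}\mathcal{N}_x$ in $\Lambda^2T^\ast\mathcal{M}$. I expect the main obstacle to be the surjectivity half of Step three, namely showing that the image is \emph{exactly} $\partial B_x^\circ$ rather than merely contained in it; this is where nondegeneracy is genuinely needed. By contrast, the fact that $L$ is only $\mathcal{C}^1$ on the zero section $\sigma_0$ causes no difficulty, because $\phi_x$ is homogeneous of degree $0$ and the whole argument takes place on $S_x$, away from $\sigma_0$.
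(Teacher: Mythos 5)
Your proposal is correct, but the heart of it --- the convexity argument --- takes a genuinely different route from the paper's. You and the paper share the same opening moves: the fiberwise reduction to $\mathcal{N}_x$, the indicatrix $S_x$ (an embedded surface because Euler's relation forces $d(L^2)\neq 0$ there), and degree-$0$ homogeneity of $\phi_x=\frac{\partial L}{\partial y}(x,\cdot)$ to reduce the image to $\phi_x(S_x)$. From there you pass to the dual side: Euler gives $\langle\phi_x(y),y\rangle=1$, the supporting-hyperplane inequality gives $\langle\phi_x(y),z\rangle\le 1$ for $z\in B_x$, hence $\mathcal{N}_x\subseteq\partial B_x^\circ$, and a Lagrange-multiplier (Gauss-map) argument upgrades this to $\mathcal{N}_x=\partial B_x^\circ$; convexity is then automatic, since $\partial B_x^\circ$ is by construction the boundary of a convex body, and injectivity of $\phi_x|_{S_x}$ (hence embeddedness, with the Lemma's immersion property and compactness) falls out of strict convexity. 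The paper instead stays on the primal side: it uses that $\frac{\partial(L^2)}{\partial y}$ is a global diffeomorphism of $\Lambda^2T_x\mathcal{M}\setminus\{\sigma_0\}$ onto $\Lambda^2T_x^\ast\mathcal{M}\setminus\{\sigma_0^\ast\}$ (Legendre transform of the strictly convex $L^2$), notes $\frac{\partial(L^2)}{\partial y}\big|_{S_x}=2\,\frac{\partial L}{\partial y}\big|_{S_x}$ to get embeddedness of the image for free, and then tries to prove convexity directly: given two image points $P_0,P_0'$, it pulls the combination $tP_0+(1-t)P_0'$ back through the diffeomorphism and rescales the preimage into $B_x$ by homogeneity. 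Your dual argument buys rigor exactly where the paper's write-up is weakest: as written, the paper only shows that a convex combination of image points is a \emph{positive multiple} of an image point and never checks that the relevant scalar is at most $1$; moreover, taken literally, a $2$-dimensional image in a $3$-dimensional fiber cannot be a convex \emph{set}, so ``convex'' must be read as ``boundary of a convex body'' --- which is precisely what the identification $\mathcal{N}_x=\partial B_x^\circ$ delivers, and it yields the paper's subsequent Remark (that $\mathcal{N}_x$ is a topological sphere) as an immediate corollary. What the paper's route buys in exchange is embeddedness without any injectivity-plus-compactness argument.

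Two points you should make explicit rather than leave implicit. First, compactness of $B_x$ (and the fact that $S_x$ meets every ray exactly once) requires $L>0$ away from the zero section; this is the tacit ``area'' positivity assumption, which both you and the paper use without stating. Second, your supporting inequality invokes convexity of $L$ itself, not of $L^2$; this is true but needs its one line of justification: $B_x$ is convex as a sublevel set of the convex function $L^2$, and $L$, being positive and positively homogeneous of degree $1$, is the gauge (Minkowski functional) of $B_x$, hence convex.
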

\begin{proof}
For $x$ in $\mathcal{M}$, we define by$$ \mathcal{N}_x=\{p\in\Lambda^2T_x^\ast\mathcal{M}/ p_{\alpha\beta}= \frac{\partial L}{\partial y^{\alpha\beta}}(x,y), y\in\Lambda^2T\mathcal{M}\}.$$
First we show that $\mathcal{N}_x$ is a surface in $\Lambda^2T_x^\ast \mathcal{M}$,
to simplify notations, we denote $L(x,y)=L(y)$. Note
\[
 S_x:=\{y\in\Lambda^2T_x\mathcal{M}\vert L^2(y)=1\}
\]
$S_x$ is a submanifold embedding, in fact, $\forall y\in S$ we have  $y^{\alpha\beta}\frac {\partial L}{\partial y^{\alpha\beta}}(y)=L(y)=1\neq0\Rightarrow d(L^2)_y\neq0$.\\
We know that $\frac{\partial L}{\partial y^{\alpha\beta}}(\lambda y)=\frac{\partial L}{\partial y^{\alpha\beta}}(y)$ thus the image of $\Lambda^2T_x\mathcal{M}\setminus \{\sigma_0\}$ by $\frac{\partial L}{\partial y^{\alpha\beta}}$ is the same as that of $S_x$, because for every half-line, $D\subset\Lambda^2T_x\mathcal{M}\setminus \{\sigma_0\}$, $D$ cut $S_x$ in a single point.\\
We show that $\frac{\partial L}{\partial y^{\alpha\beta}}(S)$ a manifold embedding and convex. For $L^2$ is a strictly convex function, thus the Legendre transform
\[
\frac{\partial(L^2)}{\partial y^{\alpha\beta}}:\Lambda^2T_x\mathcal{M} \ \ \ \longrightarrow\ \ \ \Lambda^2T_x^\ast\mathcal{M}\ \ \ \ \ \ \ \ \ \ \ \ \ \ \ \ \ \ \ \
\]
\[
 y\ \ \ \longmapsto\ \ \ \frac{\partial L^2}{\partial y^{\alpha\beta}}(y)
\]
is a global homeomorphism, for $L^2$ is of class $\mathcal{C}^\infty$ over $\Lambda^2T_x\mathcal{M}\setminus \{\sigma_0\}$ thus
$L^2:\Lambda^2T_x\mathcal{M}\setminus \{\sigma_0\}\longrightarrow\Lambda^2T_x^\ast\mathcal {M}\setminus\{\sigma_0^\ast\}$ is a diffeomorphism, for $S$ is a submanifold embedding, thus $\frac{\partial L^2}{\partial y^{\alpha\beta}}(S_x)$ is so. Or $\frac{\partial(L^2)}{\partial y^{\alpha\beta}}\mid_{S_x}=2L\frac{\partial L}{\partial y^{\alpha\beta}}\mid_{S_x}=2\frac{\partial L}{\partial y^{\alpha\beta}}\mid_{S_x}$
 (because $\forall y\in S_x$ we have $L(y)=1$).
On denote by
\[
 B_x:=\{y\in\Lambda^2T_x\mathcal{M}\vert L^2(y)\leq1\}
\]
Show that $\frac{\partial L}{\partial y^{\alpha\beta}}(B_x)$ is a convex set. Let $P_0,P'_0\in \frac{\partial L}{\partial y^{\alpha\beta}}
(B_x)$. Show that
\[
 \forall t\in[0,1]\Rightarrow tP_0+(1-t)P_0'\in\frac{\partial L}{\partial y^{\alpha\beta}}(B_x)
\]

We have $P_0,P'_0\in \frac{\partial L}{\partial y^{\alpha\beta}}(B_x)$ so there exist $y_0,y'_0\in B_x$ such that $P_0=\frac{\partial L}{\partial y^{\alpha\beta}}(y_0)$ and $P'_0=\frac{\partial L}{\partial y^{\alpha\beta}}(y'_0)$ or $B_x$ 
is convex, thus $ty_0+(1-t)y_0'\in B_x\Rightarrow L(ty_0+(1-t)y_0')=1$ and since $\frac{\partial L}{\partial y^{\alpha\beta}}:\Lambda^2T_x\mathcal{M}\setminus \{\sigma_0\}\longrightarrow\Lambda^2T_x^\ast\mathcal {M}\setminus\{\sigma_0^\ast\}$ is a diffeomorphism, then there exists an unique $P''_0\in\Lambda^2T_x^\ast\mathcal {M}\setminus\{\sigma_0^\ast\}$ (so there exists a unique $y''_0\in\Lambda^2T_x\mathcal{M}\setminus \{\sigma_0\}$) such that $t\frac{\partial L}{\partial y^{\alpha\beta}} (y_0)+(1-t)\frac {\partial L}{\partial y^{\alpha\beta}}(y_0')=P''_0:=\frac {\partial L}{\partial y^{\alpha\beta}}(y''_0)$ and since $L$ homogeneous of degree one, thus, $\forall y\in\Lambda^2T_x\mathcal{M}\setminus \{\sigma_0\}$ it exists an unique $\lambda>0$ such as $L(\lambda y)=1$, so $\lambda=\frac{1}{L(y'')}$ which gives $\frac{1}{L(y'')}y''_0\in B_x$. Then $t\frac{\partial L}
{\partial y^{\alpha\beta}} (y_0)+(1-t)\frac {\partial L}{\partial y^{\alpha\beta}}(y_0')=\frac {\partial L}{\partial 
y^{\alpha\beta}}(y''_0)=\frac {\partial L}{\partial y^{\alpha\beta}}\left(\frac{1}{L(y'')}y''_0\right)\in 
\frac {\partial L}{\partial y^{\alpha\beta}}(B_x)$, which shows the convexity of $\frac {\partial L}{\partial 
y^{\alpha\beta}}(B_x)$ thus $\partial B_x=S_x$ is a convex surface .
\end{proof}

\begin{rem}
If $\frac{\partial^2(L^2)}{\partial y^{\alpha\beta}\partial y^{\alpha'\beta'}} >0$, then $\mathcal{N}_x$ is topologically a sphere in $\Lambda^2T^\ast_x\mathcal{M}$, for all $x\in\mathcal{M}$.
\end{rem}
\subsection{Application}
Let the 2-form $\theta=p_{12}dx^1\wedge dx^2+p_{23}dx^2\wedge dx^3+ p_{31}dx^3\wedge dx^1$ on $\Lambda^2T^\ast \mathcal{M}$, then
\[
 d\theta=dp_{12}\wedge dx^1\wedge dx^2+dp_{23}\wedge dx^2\wedge dx^3+dp_{31}\wedge dx^3\wedge dx^1,
\]
 We have $\theta$ is a multisymplectic form on $\Lambda^2T^\ast\mathcal{M}$, and $d\theta\arrowvert_\mathcal{N}$ is a pre-multisymplectic form.
\begin{thm}
We have $\Sigma\subset\mathcal{M}$ a critical point of $\int_{T\Sigma} L$ if and only if the image of $T\Sigma$ by the Legendre is a surface in $ \mathcal{N} $ such that $\theta\vert_\mathcal{N}=0$, otherwise \\ $\frac{\partial L}{\partial y}(T\Sigma)\subset(\mathcal{N};\theta\mid_\mathcal{N})$, if we denote by
\[
 \beta=\sum_{\alpha<\gamma}p_{\alpha\gamma}dx^\alpha\wedge dx^\gamma=\theta\mid_\mathcal{N}
\]
then,	
\begin{equation}\label{action3}
 \mathcal{L}(\Sigma)=\int_{\frac{\partial L}{\partial y}(T\Sigma)}\beta=\int_{\Gamma}\beta
\end{equation}
 where $\Gamma:=\frac{\partial L}{\partial y}(T\Sigma)\subset\mathcal{N}$.
\end{thm}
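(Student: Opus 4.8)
The plan is to reduce the statement to two ingredients: an algebraic identity expressing $\ell$ as the pullback of $\theta$ by the Legendre transform, and a first-variation computation recast through that pullback. First I would write the Legendre transform as $\mathrm{Leg}:(x,y)\mapsto\left(x,\frac{\partial L}{\partial y}(x,y)\right)$ and check directly in the coordinates $(x;y)$ that $\mathrm{Leg}^\ast\theta=\ell$. Indeed $\theta=\sum_{\alpha<\beta}p_{\alpha\beta}\,dx^\alpha\wedge dx^\beta$, and substituting $p_{\alpha\beta}=\frac{\partial L}{\partial y^{\alpha\beta}}$ while leaving the $dx^\alpha$ untouched reproduces exactly the expression for $\ell$ derived above from Euler's formula. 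By the nondegeneracy hypothesis together with Proposition~\ref{surface_convexe}, $\mathrm{Leg}$ is an embedding onto the hypersurface $\mathcal{N}$, so $\Gamma=\frac{\partial L}{\partial y}(T\Sigma)$ is an embedded surface in $\mathcal{N}$ and integration of forms is invariant under this diffeomorphism. Hence $\mathcal{L}(\Sigma)=\int_{T\Sigma}\ell=\int_{T\Sigma}\mathrm{Leg}^\ast\theta=\int_{\Gamma}\theta$, and since $\Gamma\subset\mathcal{N}$ the pullback of $\theta$ to $\Gamma$ agrees with that of $\beta=\theta|_{\mathcal{N}}$, which yields the asserted identity $\mathcal{L}(\Sigma)=\int_{\Gamma}\beta$ of equation~(\ref{action3}).

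For the characterization of critical points I would take a compactly supported variation of $\Sigma$ generated by a vector field $\xi$ on $\mathcal{M}$ and prolong it to the vector field $\hat\xi$ on $Gr^2\mathcal{M}$ governing the induced motion of the tangent-plane field $T\Sigma$. Using Cartan's formula $\mathcal{L}_{\hat\xi}=d\circ(\hat\xi\pdt\ )+(\hat\xi\pdt\ )\circ d$ and Stokes' theorem on the compact piece of $T\Sigma$, the exact term $d(\hat\xi\pdt\,\ell)$ integrates to zero, so the first variation reduces to $\delta\mathcal{L}(\Sigma)=\int_{T\Sigma}\hat\xi\pdt\,d\ell$. Therefore $\Sigma$ is a critical point if and only if $(\hat\xi\pdt\,d\ell)|_{T\Sigma}=0$ for every admissible prolonged field $\hat\xi$.

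Next I would transport this condition to $\mathcal{N}$. Since $d\ell=\mathrm{Leg}^\ast(d\theta)$ and $\mathrm{Leg}$ is a diffeomorphism onto $\mathcal{N}$, the vanishing of $(\hat\xi\pdt\,d\ell)|_{T\Sigma}$ is equivalent, for a frame $(X_1,X_2)$ of $T\Gamma$ and every $\eta$ tangent to $\mathcal{N}$ along $\Gamma$, to $d\theta(X_1,X_2,\eta)=0$, that is $(\eta\pdt\,d\theta)|_{\Gamma}=0$. Recalling that the Hamiltonian $\mathcal{H}$ constructed above vanishes identically on $\mathcal{N}$, so that $d\mathcal{H}$ annihilates $T\mathcal{N}$, this is precisely the Hamilton equation for the pre-multisymplectic form $d\theta|_{\mathcal{N}}$ with zero Hamiltonian; equivalently $\Gamma=\frac{\partial L}{\partial y}(T\Sigma)$ is a surface in $(\mathcal{N},\theta|_{\mathcal{N}})$ solving these equations, which is the content of the stated equivalence.

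The delicate point is the passage in the last two paragraphs: one must verify that the prolonged, holonomic variations $\hat\xi$ of $T\Sigma$ sweep out enough directions $\eta$ tangent to $\mathcal{N}$ for the Euler--Lagrange condition $(\hat\xi\pdt\,d\ell)|_{T\Sigma}=0$ to be equivalent to the \emph{full} Hamilton equation on $\mathcal{N}$, rather than to a strictly weaker restriction. This is where nondegeneracy of $L$ is essential: the preceding Lemma guarantees that $\frac{\partial L}{\partial y}|_{S_x}$ is an immersion, so $\mathcal{N}$ is a genuine hypersurface and the fiber (vertical) variations are faithfully captured. The degree-zero homogeneity of $\ell$ must also be invoked to show that the dilation direction $e_3=y^{\alpha\beta}\frac{\partial}{\partial y^{\alpha\beta}}$ contributes nothing to the variation, so that the number of independent equations matches the dimension of $\Sigma$ and no spurious constraints arise.
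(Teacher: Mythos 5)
Your first paragraph is, in substance, the paper's entire proof: the paper checks in coordinates that
\[
\left(\frac{\partial L}{\partial y}\right)^\ast\theta=\frac{\partial L}{\partial y^{12}}dx^1\wedge dx^2+\frac{\partial L}{\partial y^{23}}dx^2\wedge dx^3+\frac{\partial L}{\partial y^{31}}dx^3\wedge dx^1=\ell(x,y),
\]
and then uses the fact that $\frac{\partial L}{\partial y}\mid_{T\Sigma}$ is an embedding to write $\mathcal{L}(\Sigma)=\int_{T\Sigma}\ell=\int_{T\Sigma}\left(\frac{\partial L}{\partial y}\right)^\ast\theta=\int_\Gamma\theta$. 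That is all the paper does: its proof establishes only the integral identity (\ref{action3}) and is completely silent on the ``if and only if'' characterization of critical points, which is the nontrivial half of the theorem as stated.

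Your remaining paragraphs therefore go beyond the paper, and they are where the real content lies; as an outline (first variation via Cartan's formula and Stokes, transport of the condition through the Legendre map onto $\mathcal{N}$) they are reasonable, but two steps are not closed. First, from $\delta\mathcal{L}(\Sigma)=\int_{T\Sigma}\hat\xi\pdt\,d\ell=0$ for all compactly supported $\xi$ you cannot directly conclude the pointwise condition $(\hat\xi\pdt\,d\ell)|_{T\Sigma}=0$: the admissible fields $\hat\xi$ are prolongations, so their vertical (Grassmannian) components are determined by first derivatives of $\xi$; localizing with a bump function $\chi$ replaces $\hat\xi$ by the prolongation of $\chi\xi$, which is \emph{not} $\chi\hat\xi$ but contains extra terms in $d\chi$, so the fundamental lemma of the calculus of variations does not apply verbatim. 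Second, the equivalence between the resulting Euler--Lagrange condition (tested only against holonomic, prolonged variations) and the full Hamilton-type equation $(\eta\pdt\,d\theta)|_\Gamma=0$ for \emph{every} $\eta$ tangent to $\mathcal{N}$ is exactly the delicate point you flag in your last paragraph; appealing to nondegeneracy (via the Lemma on $\mathrm{rank}\,\mathrm{Hess}(L)$ and Proposition~\ref{surface_convexe}) and to degree-zero homogeneity of $\ell$ is the right idea, but you never carry out the dimension count, so this remains a sketch rather than a proof. In short: your proposal reproduces the paper's argument for (\ref{action3}) exactly, and your attempt at the critical-point equivalence is more than the paper itself offers, but it is not yet a complete proof of that equivalence.
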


\begin{proof}
 Easily we can see that $$\left(\frac{\partial L}{\partial y}\right)^\ast\theta=\frac{\partial L}{\partial y^{12}}dx^1\wedge dx^2+ \frac{\partial L}{\partial y^{23}}dx^2\wedge dx^3+ \frac{\partial L}{\partial y^{31}}dx^3 \wedge dx^1=\ell(x,y),$$ thus
\[
\mathcal{L}(\Sigma)=\int_{T\Sigma}\ell=\int_{T\Sigma}\left(\frac{\partial L}{\partial y}\right)^\ast\theta,
\]
since $\frac{\partial L}{\partial y}\mid_{T\Sigma}$ is an embedding, we get
\[
\mathcal{L}(\Sigma)=\int_{T\Sigma}\left(\frac{\partial L}{\partial y}\right)^\ast\theta=\int_{\frac{\partial L}{\partial y}(T\Sigma)}\theta=\int_{\Gamma}\theta.
\]
\end{proof}




\subsection{Comparison with graphs}
Let $\mathcal{M}=\mathbb{R}^2\times\mathbb{R}$ and $\Sigma$ be the graph of a smooth function $f:\mathbb{R}^2\rightarrow\mathbb{R}$. \\ For $(x,y)=(x^1,x^2,f(x^1,x^2))\in\Sigma$, the tangent bundle $T_{(x,y)}\Sigma$ is generated by
\[
u_1=\left(
\begin{array}{c}
1\\
0\\
\frac{\partial f}{\partial x^1}
\end{array}
\right)
\text{ and }
u_2=\left(
\begin{array}{c}
0\\
1\\
\frac{\partial f}{\partial x^2}
\end{array}
\right).\ \ \ \ \ \ \ \ \ \ \ \
\]
We have
\[
 u_1\wedge u_2=\left(\frac{\partial}{\partial x^1}+\frac{\partial f}{\partial x^1} \frac{\partial}{\partial x^3}\right)\wedge\left(\frac{\partial}{\partial x^2} +\frac{\partial f}{\partial x^2} \frac{\partial}{\partial x^3}\right)\ \ \ \ \ \ \ \ \ \ \ \ \ \ \ \ \ \ \ \ \ \ \ \ \ \ \ \ \ \ \
\]
\[
 =\frac{\partial}{\partial x^1}\wedge\frac{\partial}{\partial x^2}-\frac{\partial f}{\partial x^1}\left(\frac{\partial}{\partial x^2}\wedge\frac{\partial}{\partial x^3}\right)-\frac{\partial f}{\partial x^2}\left(\frac{\partial}{\partial x^3}\wedge\frac{\partial}{\partial x^1}\right).
\]
Denote $y^{12}=1, \ \ y^{23}=-\frac{\partial f}{\partial x^1}\text{ and } y^{31}=-\frac{\partial f}{\partial x^2}$. Let $F(x^1,x^2,f,df)$ be a Lagrangian density such that the action over an infinitesimal square $dx^1\wedge dx^2$ is $F(x^1,x^2,f,df)dx^1\wedge dx^2$. Then there exist a 2-form areolar action $\ell$, such that for any surface $\Sigma\subset\mathbb{R}^3$, we have
\[
 \int_{\Omega}F(x^1,x^2,f,df)dx^1\wedge dx^2=\mathcal{L}(\Sigma)=\int_{T\Sigma}\ell.
\]
 Indeed, if $(e_1,e_2)$ is a basis of $\mathbb{R}^2$ and $(u_1,u_2)$ is a basis of $T_x\Sigma$, then the infinitesimal action of $F$ on the parallelogram generated by $(e_1,e_2)$ is
\[
 F(x^1,x^2,f,df)dx^1\wedge dx^2(e_1,e_2)= F(x^1,x^2,f,df)dx^1\wedge dx^2(u_1,u_2)= F(x^1,x^2,f,df)y^{12}.
\]
If we want
\[
 \sum_{1\leq\alpha<\beta\leq3}\ell_{\alpha\beta}(x,y)dx^\alpha\wedge dx^\beta(u_1,u_2) =\sum_{1\leq\alpha<\beta\leq3}\ell_{\alpha\beta}(x,y)y^{\alpha\beta}=L(x,y),
\]
then, we have
 \[
L(x,y)= \sum_{1\leq\alpha<\beta\leq3}\frac{\partial L}{\partial y^{\alpha\beta}}y^ {\alpha\beta}=\sum_{1\leq\alpha<\beta\leq3}\ell_{\alpha\beta}y^ {\alpha\beta}=y^{12} F(x^1,x^2,f,df).
\]
Since, $\frac{y^{23}}{y^{12}}=-\frac{\partial f}{\partial x^1},\ \ \frac{y^{31}} {y^{12}}=-\frac{\partial f}{\partial x^2}$, then
\[
L(x,y)=y^{12}F\left(x,f,-\frac{y^{23}}{y^{12}},-\frac{y^{31}}{y^{12}}\right).
\]
\begin{rem}
 The generalization of the above formula to hypersurfaces in a smooth manifold $\mathcal{M}$ of dimension $n$ yields
\[
L(x_1...x_n,[y])=y^{1...n-1}F\left(x,f,-\frac{y^{2...n}}{y^{1...n-1}},...,-\frac{y^{n1...n-2}}{y^{1...n-1}}\right).
\]
\end{rem}

\section{Formulation of the problem with a smooth submanifold of dimension $ p $}
Let $\mathcal{M}$ be a smooth manifold of dimension $n\in\mathbb{N}^\ast$, for all $x=(x_1,...,x_n)\in\mathcal{M}$ and $p<n$, then a basis of $\Lambda^pT_x\mathcal{M}$ is 
\[
\left(\frac {\partial}{\partial x^{\imath_1}}\wedge...\wedge\frac{\partial}{\partial x^{\imath_p}}\right)_{1\leq\imath_1<...<\imath_p\leq n}.
\]
Note $(x,y)=(x_1,...,x_n,y^{\imath_1...\imath_p})_{1\leq\imath_1<...<\imath_p \leq n}$ the coordinates on $\Lambda^p_DT_x\mathcal{M}\setminus\{\sigma_0\}$. Let $E$ an oriented element\footnote{The element is a vector subspace of $T_x\mathcal{M}$.} in the tangent space $T_x\mathcal{M}$. \\ If $(u_1,...,u_p)$ a direct basis of $E$ then we can assume that $u_1\wedge...\wedge u_p\in\Lambda^pT_x\mathcal{M}\setminus\{\sigma_0\}$ represents $E$, denote 
\[
 \Lambda^p_DT_x\mathcal{M}:=\{u_1\wedge...\wedge u_p|(u_1,...,u_p)\text{ direct basis of $E$, and oriented in } T_x\mathcal{M}\},
\]
 thus for $u\in \Lambda^p_DT_x\mathcal{M}$ we have
\[
 u:=u_1\wedge...\wedge u_p=\sum_{1\leq\imath_1<...<\imath_p\leq n}y^{\imath_1...\imath_p} \frac {\partial}{\partial x^{\imath_1}}\wedge...\wedge\frac{\partial}{\partial x^{\imath_p}}.
\]
\begin{defi}
For all $x\in\mathcal{M}$, we define the Grassmannian bundle by:
\[
 Gr_x^p\mathcal{M}:=\{(x,E)\vert x\in\mathcal{M}, \ \ E \text{ an oriented element in } T_x\mathcal{M}\}
\]
\end{defi}
\begin{rem} 
We have
\[
 Gr_x^p\mathcal{M}\simeq(\Lambda^p_DT_x\mathcal{M}-\{\sigma_0\})/\mathbb{R}^\ast
\]
Then the homogeneous coordinates on the Grassmannian bundle over $\mathcal{M}$ of dimension $p<n$, are $(x_1,...,x_n;[y^{\imath_1...\imath_p}]_{1\leq\imath_1<...<\imath_p\leq n})$ thus,
 \[
  Gr^p\mathcal{M}:=\{(x;[y]):=(x_1,...,x_n;[y^{\imath_1...\imath_p}]_{1\leq\imath_1<...<\imath_p\leq n})\vert x=(x^1,...,x^n)\in\mathcal{M}
\]
\[
 E=(y)=[y^{\imath_1...\imath_p}]_{1\leq\imath_1<...<\imath_p\leq n}\text{ oriented element in }T_x\mathcal{M}\}
\]
with condition $y^{\imath_1...\imath_k...\imath_{k'}...\imath_p}+y^{\imath_1...\imath_ {k'}...\imath_k...\imath_p}=0$. 
\end{rem}
\subsection{Lagrangian formulation}
\begin{defi}
Let $E_x=(x,[y]):=(x_1,...,x_n;[y^{\imath_1...\imath_p}]_{1\leq\imath_1<...<\imath_p\leq n})\in Gr^p\mathcal{M}$, we define a $p$-form areolar action $\ell$ on $Gr_x^p\mathcal{M}$ by
\[
 \ell(x,[y]):=\sum_{1\leq\imath_1<...<\imath_p\leq n}\ell_{\imath_1...\imath_p} (x,y)dx_{\imath_1}\wedge ...\wedge dx_{\imath_p},
\]
where $\ell_{\imath_1...\imath_p}$ are homogenous of degree 0 in $y$ over $]0,+\infty[$.
\begin{defi}
 Let a Lagrangian $L:\Lambda^p_DT_x\mathcal{M}\rightarrow \mathbb{R}$ as a smooth function on $\Lambda^p_DT_x\mathcal{M}$ such as
\begin{enumerate}
\item $L$ is $\mathcal{C}^{\infty}$ over $L:\Lambda^pT_x\mathcal{M}\setminus\{\sigma_0\}$ where $\sigma_0$ is the vanish section.
\item $L$ is homogenous of degree 1 in $y$ on $]0,+\infty[$.
\end{enumerate}
\end{defi}
\begin{rem}
 For all $x\in\mathcal{M}$, if $(u_1,...,u_p)$  is a direct basis $E_x\in Gr_x^p\mathcal{M}$, then $L(x,u_1\wedge...\wedge u_p)$ represent \textit{``
the infinitesimal volume action''} formed by $p$ vectors $u_1,...,u_p$.
\end{rem}
Let $(\theta_1,...,\theta_p)$ be the dual basis of $(u_1,...,u_p)$, we define
\begin{equation}\label{action2}
 \mathcal{L}(Gr_x^p\mathcal{M}):=\int_{TGr_x^p\mathcal{M}} L(x,u_1\wedge...\wedge u_p)\theta_1\wedge... \wedge\theta_p.
 \end{equation}
\end{defi}
If we have $\ell_{\imath_1...\imath_p}=\frac{\partial L}{\partial y^{\imath_1...\imath_p}} \text{ with } 1\leq\imath_1<...<\imath_p\leq n$, then for  the Euler formula we have $\sum_{1\leq\imath_1<...<\imath_p\leq n}\frac{\partial L} {\partial y^{\imath_1...\imath_p}} dx_{\imath_1}\wedge ...\wedge dx_{\imath_p}=\sum_ {1\leq\imath_1<...<\imath_p\leq n}\ell_{\imath_1...\imath_p} dx_{\imath_1}\wedge ...\wedge dx_{\imath_p}$ thus
\[
 \ell(x;[y])=\sum_{1\leq\imath_1<...<\imath_p\leq n}\left(dx_{\imath_1}\wedge ...\wedge dx_{\imath_p}\otimes\frac{\partial}{\partial y^{\imath_1...\imath_p}}\right)\pdt\ \ dL
\]
\[
=\sum_{1\leq\imath_1<...<\imath_p\leq n}\frac{\partial L}{\partial y^{\imath_1...\imath_p}} dx_{\imath_1}\wedge ...\wedge dx_{\imath_p}.\ \ \ \ \ \ \
\]
Then $\sum_{1\leq\imath_1<...<\imath_p\leq n}\left(dx_{\imath_1}\wedge ...\wedge dx_{\imath _p}\otimes \frac{\partial}{\partial y^{\imath_1...\imath_p}} \right)$ is the canonical operator $(\Lambda^p_xT\mathcal{M})^\ast\longrightarrow\Lambda^pT_x^\ast \mathcal{M}$, thus we write
\[
 \ell(x;[y])=\sum_{1\leq\imath_1<...<\imath_p\leq n}\left(dx_{\imath_1}\wedge ...\wedge dx_{\imath_p}\otimes\frac{\partial}{\partial y^{\imath_1...\imath_p}}\right)\pdt\ \ dL
\]
Let $L$ be a Lagrangian, thus we can write the action (\ref{action2}),
\[
 \mathcal{L}(Gr^p_x\mathcal{M}):=\int_{TGr^p_x\mathcal{M}}\ell
=\int_{TGr^p_x\mathcal{M}}\sum_{1\leq\imath_1<...<\imath_p\leq n}\left(dx_{\imath_1}\wedge ...\wedge dx_{\imath_p}\otimes\frac{\partial}{\partial y^{\imath_1...\imath_p}}\right)\pdt\ \ dL.
\]
\begin{defi}
A critical point of $\mathcal{L}$ if and only if, for all compact $K\subset\mathcal{M},\ \ K\cap Gr^p_x\mathcal{M}$ is a critical point of $\mathcal{L}_K(Gr^p_x\mathcal{M}):=\int_{\pi^{-1}_{TGr^p\mathcal{M}}(K)\cap TGr^p_x\mathcal{M}}\ell$, where $\pi_{Gr^p\mathcal{M}}:Gr^p\mathcal{M}\longrightarrow\mathcal{M}$ is the canonical projection.
\end{defi}
\subsection{Legendre transform-Hamiltonian formulation (multisymplectic)}
 We consider the Hamiltonian function
 \[
 \mathcal{H}(x,P)=\sum_{1\leq\imath_1<...<\imath_p\leq n}p_{\imath_1...\imath_p} y^{\imath_1...\imath_p}-L(x,y)
\]
where $\frac{\partial L}{\partial y}(x,[y])=\left(\frac{\partial L}{\partial y^{\imath_1...\imath_p}}(x,y)\right)_{\imath_1<...<\imath_p}$ is the Legendre transform and $y$ is a solution of $\frac{\partial L}{\partial y}(x,[y])=p_{\imath_1...\imath_p}(x,[y]) $.\\ Since $L$ is homogenous of degree 1 in $y$ on $]0,+\infty[$, then by formula Euler  we have
\[
 L(x,[y])=\frac{\partial L}{\partial y^{\imath_1...\imath_p}}(x,y) y^{\imath_1...\imath_p} =\sum_{1\leq\imath_1<...<\imath_p\leq n} p_{\imath_1...\imath_p}y^{\imath_1...\imath_p}
\]
we conclude that the Hamiltonian $\mathcal{H}$ vanishes.
\begin{prop} We consider $L:\Lambda^p_D T\mathcal{M}\rightarrow\mathbb{R}$, a Lagrangian nondegenerate, then
 the image of Legendre transform $\mathcal{I}:=\{(x,p)\in\Lambda^p_DT^\ast\mathcal{M}/ p_{\imath_1...\imath_p} (x,[y])= \frac{\partial L}{\partial y^{\imath_1...\imath_p}}(x,[y]), y\in\Lambda^p_D T\mathcal{M}\}$ is a regular hypersurface and convex in $\Lambda^p_DT^\ast \mathcal{M}$.
\end{prop}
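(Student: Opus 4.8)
The plan is to reduce the statement to a fiberwise claim and then run the same argument as in the proof of Proposition~\ref{surface_convexe}, now carried out with multi-indices $\imath_1<\dots<\imath_p$ in place of the single pair $\alpha\beta$. Fixing $x\in\mathcal{M}$ and writing $L(y):=L(x,y)$, it suffices to show that
\[
\mathcal{I}_x:=\Big\{p\in\Lambda^p_DT_x^\ast\mathcal{M}\ \Big|\ p_{\imath_1\dots\imath_p}=\tfrac{\partial L}{\partial y^{\imath_1\dots\imath_p}}(y),\ y\in\Lambda^p_DT_x\mathcal{M}\Big\}
\]
is a regular convex hypersurface in $\Lambda^p_DT_x^\ast\mathcal{M}$, since regularity and convexity are both fiberwise properties that depend smoothly on $x$.

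First I would introduce the unit level set $S_x:=\{y\mid L^2(y)=1\}$ and the sublevel set $B_x:=\{y\mid L^2(y)\le 1\}$. The regularity of $S_x$ follows exactly as before: for $y\in S_x$, Euler's identity gives $\sum_{\imath_1<\dots<\imath_p}y^{\imath_1\dots\imath_p}\tfrac{\partial L}{\partial y^{\imath_1\dots\imath_p}}=L(y)=1\neq 0$, so $d(L^2)_y=2L\,dL_y\neq 0$ and $S_x$ is an embedded hypersurface. Because $L$ is homogeneous of degree $1$, each partial $\tfrac{\partial L}{\partial y^{\imath_1\dots\imath_p}}$ is homogeneous of degree $0$; hence every ray meets $S_x$ in a single point and the image of the punctured domain under $\tfrac{\partial L}{\partial y}$ coincides with its image of $S_x$. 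On $S_x$ one has $\tfrac{\partial(L^2)}{\partial y^{\imath_1\dots\imath_p}}=2L\,\tfrac{\partial L}{\partial y^{\imath_1\dots\imath_p}}=2\tfrac{\partial L}{\partial y^{\imath_1\dots\imath_p}}$, which identifies $\mathcal{I}_x$ with $\tfrac12\tfrac{\partial(L^2)}{\partial y}(S_x)$.

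The regularity and embeddedness of $\mathcal{I}_x$ would then come from nondegeneracy: the hypothesis $\tfrac{\partial^2(L^2)}{\partial y^{\imath_1\dots\imath_p}\partial y^{\imath_1'\dots\imath_p'}}>0$ makes $L^2$ strictly convex, so its gradient map is a diffeomorphism onto its image away from the zero section, and the embedded $S_x$ is carried to an embedded hypersurface $\mathcal{I}_x$. For convexity I would copy the final step of Proposition~\ref{surface_convexe}: given $P_0,P_0'\in\tfrac{\partial L}{\partial y}(B_x)$ with preimages $y_0,y_0'\in B_x$, the convexity of $B_x$ places the segment $ty_0+(1-t)y_0'$ in $B_x$, and the diffeomorphism property together with the degree-zero homogeneity (to rescale back onto $S_x$) produces a preimage of $tP_0+(1-t)P_0'$ in $B_x$; thus $\tfrac{\partial L}{\partial y}(B_x)$ is convex and its boundary $\mathcal{I}_x=\tfrac{\partial L}{\partial y}(S_x)$ is a convex hypersurface.

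The hard part will be that, for general $2\le p\le n-2$, the domain $\Lambda^p_DT_x\mathcal{M}$ of \emph{decomposable} $p$-vectors is not a linear subspace of $\Lambda^pT_x\mathcal{M}$ but the Grassmann cone cut out by the quadratic Pl\"ucker relations. In the model case $p=2,\ n=3$ every bivector is decomposable, so $\Lambda^2_DT_x\mathcal{M}=\Lambda^2T_x\mathcal{M}\setminus\{0\}$ and both the global diffeomorphism of the gradient map and the convexity of $B_x$ were statements about an honest vector space. To transfer the argument I would extend $L$ (equivalently $L^2$) to a strictly convex, degree-one homogeneous function on all of $\Lambda^pT_x\mathcal{M}$, run the convexity computation there, and only afterwards intersect with the Pl\"ucker cone; the delicate point is to check that $B_x$ still meets the cone in a set whose image under $\tfrac{\partial L}{\partial y}$ is the full boundary hypersurface, so that no convexity is lost in passing from the ambient space to the cone.
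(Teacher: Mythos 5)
Your proposal takes essentially the same route as the paper: the paper's entire proof of this proposition is the single line ``Same proof of (\ref{surface_convexe})'', and your first three paragraphs carry out exactly that transfer --- the unit level set $S_x$, the sublevel set $B_x$, Euler's identity for the regularity of $S_x$, degree-zero homogeneity of the partials to reduce to $S_x$, and the segment argument for convexity of $\frac{\partial L}{\partial y}(B_x)$, all with the pair $\alpha\beta$ replaced by the multi-index $\imath_1\dots\imath_p$.

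The one place you go beyond the paper is your final paragraph, and the concern you raise there is genuine: for $2\le p\le n-2$ the decomposable $p$-vectors form the Pl\"ucker cone rather than (the complement of the origin in) a vector space, so the segment $ty_0+(1-t)y_0'$ used in the convexity step need not lie in $\Lambda^p_DT_x\mathcal{M}$ at all, and the hypothesis $\frac{\partial^2(L^2)}{\partial y^{\imath_1\dots\imath_p}\partial y^{\imath_1'\dots\imath_p'}}>0$ must be interpreted on, or extended off, that cone. The paper silently ignores this, since in its model case $p=2$, $n=3$ every bivector is decomposable and the issue does not arise. Your suggested repair --- extend $L^2$ to a strictly convex homogeneous function on all of $\Lambda^pT_x\mathcal{M}$, run the argument there, then intersect with the cone --- is the natural one, but as you yourself note, it leaves unverified that the boundary of the resulting convex body restricted to the cone recovers all of $\mathcal{I}_x$. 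So your write-up is faithful to, and in fact more careful than, the paper's intended argument; neither you nor the paper closes that last gap, and an honest version of this proposition should either restrict to the hypersurface case $p=n-1$ (where decomposability is automatic) or state the extension of $L$ as an additional hypothesis.
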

\begin{proof}
 Same proof of (\ref{surface_convexe}).
\end{proof}
On $\Lambda^pT^\ast\mathcal{M}$, we define the $p$-form $\theta$ by
\[
 \theta=\sum_{1\leq\imath_1<...<\imath_p\leq n}p_{\imath_1...\imath_p}dx_{\imath_1} \wedge...\wedge dx_{\imath_p},
\]
\begin{rem}
 $\theta$ is a multisymplectic form on $\Lambda^P_DT^\ast\mathcal{M}$ and $d\theta\arrowvert_\mathcal{I}$ is a pre-multisymplectic form.
\end{rem}
\begin{thm}
A critical point of $\int_{TGr_x^p\mathcal{M}} L$ if and only if the image by the Legendre of $TGr_x^p\mathcal{M}$ is a vector subspace in $ \mathcal{M}$, such that $\theta\mid_\mathcal{I}$ is vanishes.
If
\[
 \beta=\theta\mid_\mathcal{I}\text{ and } \Gamma:=\frac{\partial L}{\partial y}(TGr_x^p\mathcal{M})\subset\mathcal{I}
\]
then
\begin{equation}\label{action4}
 \mathcal{L}(Gr_x^p\mathcal{M})=\int_{\frac{\partial L}{\partial y}(TGr_x^p\mathcal{M})}\beta=\int_{\Gamma}\beta.
\end{equation}
\end{thm}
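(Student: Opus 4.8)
The plan is to carry over, essentially verbatim, the argument used for the theorem in the $n=3$, $p=2$ case, since that proof was purely formal in the number of indices: the only inputs were the identity $\left(\frac{\partial L}{\partial y}\right)^\ast\theta=\ell$, the embedding property of the Legendre map under nondegeneracy, and the change-of-variables formula for integrals of differential forms. Each of these has an immediate multi-index analogue, so the proof reduces to writing out the substitution with $p$ indices and invoking the convexity Proposition in place of its surface version.

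First I would establish the pullback identity. The Legendre transform $\frac{\partial L}{\partial y}$ sends $(x,[y])$ to $\left(x,\left(\frac{\partial L}{\partial y^{\imath_1...\imath_p}}(x,y)\right)_{\imath_1<...<\imath_p}\right)$, leaving the base point $x$ fixed and substituting $p_{\imath_1...\imath_p}=\frac{\partial L}{\partial y^{\imath_1...\imath_p}}$ for the fibre coordinates. Applying this to $\theta=\sum_{1\leq\imath_1<...<\imath_p\leq n}p_{\imath_1...\imath_p}dx_{\imath_1}\wedge...\wedge dx_{\imath_p}$ and using that the $dx_{\imath_j}$ are unchanged, I obtain
\[
\left(\frac{\partial L}{\partial y}\right)^\ast\theta=\sum_{1\leq\imath_1<...<\imath_p\leq n}\frac{\partial L}{\partial y^{\imath_1...\imath_p}}dx_{\imath_1}\wedge...\wedge dx_{\imath_p}=\ell(x,[y]),
\]
the last equality being the identification $\ell_{\imath_1...\imath_p}=\frac{\partial L}{\partial y^{\imath_1...\imath_p}}$ recorded above. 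This step is a routine substitution.

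Next I would use nondegeneracy. By the Proposition on the convexity of $\mathcal{I}$ (proved as in Proposition \ref{surface_convexe}), when $L$ is nondegenerate the restriction of $\frac{\partial L}{\partial y}$ to $TGr_x^p\mathcal{M}$ is an embedding, so $\Gamma:=\frac{\partial L}{\partial y}(TGr_x^p\mathcal{M})$ is a $p$-dimensional submanifold contained in $\mathcal{I}$. The change-of-variables formula then gives
\[
\mathcal{L}(Gr_x^p\mathcal{M})=\int_{TGr_x^p\mathcal{M}}\ell=\int_{TGr_x^p\mathcal{M}}\left(\frac{\partial L}{\partial y}\right)^\ast\theta=\int_{\Gamma}\theta,
\]
and since $\Gamma\subset\mathcal{I}$ with $\beta=\theta\mid_\mathcal{I}$, the restriction of $\theta$ to $\Gamma$ coincides with the restriction of $\beta$, which yields the claimed identity $\mathcal{L}(Gr_x^p\mathcal{M})=\int_{\Gamma}\beta$.

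The hard part will be the \emph{equivalence} asserted by the ``if and only if'', not the action formula. To treat it I would exploit that the Hamiltonian $\mathcal{H}$ vanishes identically (shown above via the Euler relation), so the variational dynamics is encoded entirely in the restricted tautological form on $\mathcal{I}$: a submanifold is critical for $\mathcal{L}$ exactly when its Legendre image is an integral $p$-submanifold of the pre-multisymplectic form $d\theta\mid_\mathcal{I}$, and in the degree-one homogeneous setting this is equivalent to the vanishing of $\theta\mid_\mathcal{I}$ along $\Gamma$, which is the multisymplectic reformulation of the Euler--Lagrange equations. The genuinely delicate point is to confirm that the embedding and nondegeneracy hypotheses survive for every $p<n$, so that $\Gamma$ is actually a submanifold on which the pullback computation is legitimate; this is exactly where the convexity of $\mathcal{I}$ furnished by the preceding Proposition does the real work.
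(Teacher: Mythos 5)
Your proposal takes essentially the same route as the paper: the paper's entire proof of this theorem is the citation ``See proof of (1.13),'' and that earlier proof consists of exactly your three steps --- the pullback identity $\left(\frac{\partial L}{\partial y}\right)^\ast\theta=\ell$, the embedding property of the Legendre map restricted to the tangent lift, and the change-of-variables formula --- transcribed here with multi-indices. Note that the ``if and only if'' clause is not actually proved in the paper either (neither here nor in the $n=3$, $p=2$ case), so your closing sketch of the equivalence via the vanishing Hamiltonian and the pre-multisymplectic form $d\theta\mid_{\mathcal{I}}$, while itself only an outline, already goes beyond what the paper supplies.
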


\begin{proof}
 See proof of (1.13).
\end{proof}

\subsection{Comparison with graphs}
We will take $\mathcal{M}=\mathbb{R}^p\times\mathbb{R}^{n-p}$ and $\Gamma$ the graph of smooth function $f:\mathbb{R}^P\rightarrow\mathbb{R}^{n-p}$. Denote $\left(q_\imath^\jmath:=\frac{\partial f_\jmath}{\partial x_\imath}\right) _{\substack{1 \leq\imath\leq p\\ 1\leq\jmath\leq n-p}}$, in a neighberhood of a point $x=(x_1,...,x_p,f_1,...,f_{n-p})\in\Gamma$ the tangent bundle $T_x\Gamma$ is generated by
\[
 u^1=\left(
\begin{array}{c}
1\\
0\\
\vdots\\
0\\
\frac{\partial f_1}{\partial x_1}\\
\frac{\partial f_2}{\partial x_1}\\
\vdots\\
\frac{\partial f_{n-p}}{\partial x_1}
\end{array}
\right),
u^2=\left(
\begin{array}{c}
0\\
1\\
\vdots\\
0\\
\frac{\partial f_1}{\partial x_2}\\
\frac{\partial f_2}{\partial x_2}\\
\vdots\\
\frac{\partial f_{n-p}}{\partial x_2}\\
\end{array}
\right),
...,u^p=\left(
\begin{array}{c}
0\\
\vdots\\
0\\
1\\
\frac{\partial f_1}{\partial x_p}\\
\frac{\partial f_2}{\partial x_p}\\
\vdots\\
\frac{\partial f_{n-p}}{\partial x_p}\\
\end{array}
\right)
\]
so that
\[
 u_1\wedge u_2\wedge...\wedge u_p=\left(\frac{\partial}{\partial x_1}+\sum_{\jmath=p+1} ^n\frac{\partial f_1}{\partial x_{\jmath-p}} \frac{\partial}{\partial x_\jmath}\right)\wedge\left(\frac{\partial} {\partial x_2}+\sum_{\jmath=p+1}^n \frac{\partial f_2}{\partial x_{\jmath-p}}\frac{\partial}{\partial x_\jmath} \right)\wedge...\wedge
\]
\[
 \left(\frac{\partial}{\partial x_p}+\sum_{\jmath=p+1}^n\frac{\partial f_p}{\partial x_{\jmath-p}}\frac{\partial} {\partial x_\jmath}\right) =\sum_{1\leq\imath_1<...<\imath_p\leq n}c^{\imath_1...\imath_p}\frac{\partial}{\partial x_{\imath_1}}\wedge...\wedge\frac{\partial}{\partial x_{\imath_p}}
\]
with $c^{1...p}=1$ and $c^{\imath_1...\imath_p}$ (where $\imath_1...\imath_p\neq1...p)$ are functions which can be computed, and which depends on $\left(\frac{\partial f_\jmath}{\partial x_\imath}\right) _{\substack{1 \leq\imath\leq p\\ 1\leq\jmath\leq n-p}}$. In particular, we can show that  
\[
\left\{
\begin{array}{c}
c^{12..p}=1 \ \ \ \ \ \ \ \ \ \ \ \ \ \ \ \ \ \ \ \ \ \ \ \ \ \ \ \ \ \ \ \ \ \ \ \ \ \ \ \ \ \ \ \ \ \ \ \ \ \ \ \ \ \ \\
\frac{c^{1...\imath-1\imath+1...pp+\jmath}}{c^{123...p}}=\frac{\partial f_\jmath}{\partial x_\imath}\text{ for }1 \leq\imath\leq p,\ \ 1\leq\jmath\leq n-p
\end{array}
\right.
\]
 We suppose $y^{\imath_1...\imath_p}=c^{\imath_1...\imath_p}$, $(e_1,...,e_p,e_{p+1},...,e_n)$ is a basis of $\mathbb{R}^n\times\mathbb{R}^{n-p}$ and $(u_1,...,u_p)$ a basis of $T_x \Gamma$. \\ If $F(x_1,...,x_p,f_1,...,f_{n-p},\nabla f)$ is a Lagrangian density, then its infinitesimal  action  over the parallelepiped generated by $(e_1,...,e_p)$ is
\[
  F(x_1,...,x_p,f_1,...,f_{n-p},\nabla f)dx_1\wedge...\wedge dx_p(e_1,...,e_p)=
\]
\[ 
F(x_1,...,x_p,f_1,...,f_{n-p},\nabla f)dx_1\wedge...\wedge dx_p(u_1,...,u_p)=  F(x_1,...,x_p,f_1,...,f_{n-p},\nabla f)y^{1...p}
\]
But we ask that this precedent egality is equivalent to
\[
 \sum_{\substack{1\leq\imath\leq p\\ 1\leq\jmath\leq n-p}} \ell_{1...\imath-1\imath+1...p p+\jmath}(x,[y])dx_1\wedge...\wedge dx_{\imath-1}\wedge dx_{\imath+1}...\wedge dx_p\wedge dx_{p+\jmath} (u_1,...,u_p)
\]
\[
 \sum_{\substack{1\leq\imath\leq p\\ 1\leq\jmath\leq n-p}} \ell_{1...\imath-1\imath+1...p p+\jmath}(x,[y])y^{1...\imath-1\imath+1...p p+\jmath}=L(x,[y^{1...\imath-1\imath+1...p p+\jmath}])
\]
then
\[
 L(x;[y])=\sum_{\substack{1\leq\imath\leq p\\ 1\leq\jmath\leq n-p}}\frac{\partial L}{\partial y^{1...\imath-1\imath+1...p p+\jmath}}y^{1...\imath-1\imath+1...p p+\jmath}
\]
\[
 =\sum_{\substack{1\leq\imath\leq p\\ 1\leq\jmath\leq n-p}}\ell_{1...\imath-1\imath+1...p p+\jmath}(x;[y]) y^{1...\imath-1\imath+1...p p+\jmath}=y^{1...p}F(x^1,...,x^p,f_1,...,f_{n-p},\nabla f)
\]
thus
\[
 L(x,[y])=y^{1...p}F\left(x^1,...,x^p,f_1,...,f_{n-p},\left(\frac{y^{1...\imath-1\imath+1...pp+\jmath}}{y^{123...p}}\right)_{\substack{1 \leq\imath\leq p\\ 1\leq\jmath\leq n-p}}\right).
\]
\newpage
\nocite{*}
\bibliographystyle{plain}
\bibliography{referencesarticle2}

\end{document}